\newtheoremstyle{mine}
{\baselineskip}
{\baselineskip}
{\itshape}
{
}
{\bfseries}
{.}
{.5em}
{#1 #2\ifx#3\relax\else~(#3)\fi}
\theoremstyle{mine}
\newtheorem{theorem}{Theorem}
\newtheorem{corollary}[theorem]{Corollary}
\newtheorem{proposition}[theorem]{Proposition}
\newtheorem{lemma}[theorem]{Lemma}
\newtheorem{definition}[theorem]{Definition}
\theoremstyle{remark}
\newtheorem{remark}{Remark}
\colorlet{shadecolor}{blue!10}
\renewcommand{\epsilon}{\varepsilon}
\newcommand{\Z}{\mathbb{Z}}
\def\<#1{\langle #1\rangle}
\def\bi{\begin{itemize}}  
\def\ei{\end{itemize}}
\def\bnum{\begin{enumerate}} 
\def\enum{\end{enumerate}}
\title[Long-range models in 1D  revisited]
{
Long-range models in 1D revisited
}
\author{Hugo Duminil-Copin, Christophe Garban, Vincent Tassion}
\address
{Universit\'e de Gen\`eve, 2-4 rue du Li\`evre, 1204 Gen\`eve, Switzerland, Institut des Hautes \'Etudes Scientifiques, 35 route de Chartres, 91440 Bures-sur-Yvette, France}
\email{hugo.duminil@unige.ch,duminil@ihes.fr}
\address
{Université Claude Bernard Lyon 1, CNRS UMR 5208, Institut Camille Jordan, 69622 Villeurbanne, France \, and Institut Universitaire de France (IUF)}
\email{garban@math.univ-lyon1.fr}
\address
{ETH Zurich, Department of Mathematics, Group 3
HG G 66.5
Rämistrasse 101,
8092 Zurich,
Switzerland}
\email{Vincent.Tassion@math.ethz.ch}
\begin{document}

\maketitle

\begin{abstract}
In this short note, we revisit a number of classical result{s} on long-range 1D percolation, Ising model and Potts models \cite{FS82,NewSch,aizenman1988,ImbrieNewman}. 
More precisely, we show that for Bernoulli percolation, FK percolation and Potts models, there is symmetry breaking for the $1/r^2$-interaction at large $\beta$, and that the phase transition is necessarily discontinuous. {We also show, following the notation of \cite{aizenman1988} that $\beta^*(q)=1$ for all $q\geq 1$. }
\end{abstract}

\section{Setting}

Long-range models on the 1D line have a rich history in physics and mathematical physics.  Historically, Dyson, motivated by predictions of Anderson \cite{AYH70} and Thouless \cite{Tho69} as well as connections with the {\em Kondo problem}, initiated in \cite{Dys69} the rigorous analysis of long-range Ising models on $\Z$ with coupling constants given by $J_{i-j}\sim |i-j|^{-s}$ with $s\in(1,2)$. 
 Fröhlich and Spencer analysed in \cite{FS82} the ``scale-invariant'' case where the coupling constants are given by $\frac 1 {|i-j|^2}$. 
Later,  Aizenman, Chayes, Chayes and Newman proved in \cite{aizenman1988} the discontinuity of the phase transitions which had been anticipated by Thouless for all $q\geq 1$. In this paper, we revisit these classical results.

We begin by treating the case of Bernoulli percolation and then consider the random-cluster model and its applications to the Ising and Potts models. Note that historically, the Ising model was studied before Bernoulli percolation, but our renormalization technique is simpler to present in the Bernoulli percolation setting.

Our notation/setup will follow in part \cite{NewSch,aizenman1988}.  In the whole paper, we consider $J_{i,j}=J(i-j)=1/|i-j|^2$ for every $i\ne j$. For $\beta,\lambda>0$, we define 
\begin{equation}
p_{i,j}(\beta,\lambda):=\begin{cases}1-\exp[-\beta J_{i,j}]&\text{ if }|i-j|\ge 2,\\ 1-\exp[-\lambda]&\text{ if }|i-j|=1.\end{cases}
\end{equation}
For simplicity, we will only consider the most interesting case where the point-to-point interaction will decay as $1/|i-j|^2$ but our methods   allows one to recover  known results for more general weights, e.g. $J(x)=1/x^s$, $s\in (1,2)$, and  other percolation models, such as   directed percolation, see Remark~\ref{rmk:1}.  

Let us point out that the renormalization argument in this paper follows the same setup as in our work \cite{DGT20} except that the $1D$ setting here makes things simpler.

\section{Long-range Bernoulli percolation}

\subsection{Statement of the result.}

Consider the long-range Bernoulli percolation measure $\mathbb P_{\beta,\lambda}$ on $\mathbb Z$ defined by the property that each unordered pair $\{i,j\}$ (also called edge) is {\em open} with probability $p_{i,j}(\beta,\lambda)$, and {\em closed} with probability $1-p_{i,j}(\beta,\lambda)$, independently for every edge $\{i,j\}\subset \mathbb Z$. Let $\theta(\beta,\lambda)$ be the probability that $0$ is connected to infinity by a path of open edges.

\begin{theorem}\label{thm:main perco}
We have the following two properties:
\begin{itemize}
\item[(i)] For $\beta>1$, $\theta(\beta,\lambda)>0$ for $\lambda<\infty$ large enough.
\item[(ii)] For every $\beta,\lambda>0$, $\theta(\beta,\lambda)>0$ implies $\beta\theta(\beta,\lambda)^2\ge 1$.
 \end{itemize}
\end{theorem}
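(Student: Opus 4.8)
The plan is to treat the two parts separately, since they use very different ideas.

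For part (ii), the strategy is a classical "two-arm" or BK-type argument exploiting the scale invariance of the $1/r^2$ interaction. The point is that the expected number of edges crossing a given vertex $n$ (say, edges $\{i,j\}$ with $i\le n<j$) that connect a fixed half-line is governed by $\sum_{i\le n}\sum_{j>n} \beta J_{i,j} \approx \beta \log(\cdot)$, which diverges only logarithmically; scale invariance then forces a sharp relation. Concretely, I would consider the event that $0$ is connected to infinity and look, for a scale $N$, at the "pivotal" long edge that first jumps out of $[-N,N]$. Condition on the cluster of $0$ inside $[-N,N]$; for $0$ to reach infinity, some edge from this cluster must cross to the outside and that outside endpoint must itself reach infinity. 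Using the BK inequality (or the obvious independence across disjoint edges) one gets a recursive inequality of the form $\theta \le (\text{expected number of crossing edges from the cluster}) \times \theta$, and after unwinding the scale invariance — the key input being $\sum_{k\ge N}\beta/k^2 \sim \beta/N$ and the fact that the cluster of $0$ restricted to $[-N,N]$ has size of order $N\theta$ — this produces exactly $\beta\theta^2\ge 1$. The clean way to phrase this is: summing $\beta J_{i,j}$ over $i$ in the cluster of $0$ inside an interval of length $N$ and $j$ outside gives, in expectation, roughly $\beta\theta$, and since this sum of edge-probabilities must be at least the expected number of such edges that lead to $\infty$ (which is $\theta$ times the number of such edges), one needs $\beta\theta\cdot\theta\ge 1$. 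I expect the main obstacle here to be making the "scale-invariance bookkeeping" rigorous — i.e. converting the heuristic $N$-independence into an honest inequality, presumably by taking $N\to\infty$ along a subsequence and using that $\theta(\beta,\lambda)$ is the limiting density of the infinite cluster (ergodicity on $\Z$).

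For part (i), the plan is a multiscale renormalization exactly in the spirit of \cite{DGT20}, as the authors flag. Fix $\beta>1$ and work at scales $N_k = N_0^{2^k}$ (or $N_k=L^k$ for a well-chosen base $L$). Call a block $[0,N]$ \emph{good} if it contains a "crossing cluster" connecting its left third to its right third and, crucially, of macroscopic density. The renormalization step: a block at scale $N_{k+1}$ is built from $\sim N_{k+1}/N_k$ sub-blocks at scale $N_k$; most of these are good by induction, and one uses the long edges with $|i-j|\asymp N_k$ — whose individual probabilities are $\asymp \beta/N_k^2$ but whose \emph{number} between two good sub-blocks at distance $\asymp N_{k+1}$ is $\asymp N_k^2$, so the expected number of connecting long edges is $\asymp \beta$, a constant $>1$ — to glue the good sub-blocks into one crossing cluster at the larger scale. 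Because $\beta>1$, this "gluing probability" can be taken close to $1$, and with the one-dimensional structure (a block is crossed iff a chain of sub-blocks is linked) one runs a standard bootstrap: $1-q_{k+1}\le C(1-q_k)^2 + (\text{error from }\lambda)$, so choosing $\lambda$ large enough to make $q_0$ close to $1$ makes $q_k\to 1$ and yields $\theta(\beta,\lambda)>0$. I expect the delicate point to be the \emph{density} (or "fullness") requirement in the definition of a good block: one must propagate not just existence of a crossing but that it occupies a positive fraction of the vertices, so that at the next scale there are enough endpoints available for the long edges to attach to — this is precisely the extra bookkeeping that \cite{DGT20} handles and that makes the one-dimensional $1/r^2$ case work right at $\beta=1$ rather than only for $\beta$ large.

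Finally, I would remark that parts (i) and (ii) are complementary in a pleasant way: (i) gives symmetry breaking for all $\beta>1$, while (ii) shows that whenever $\theta>0$ one has $\theta\ge\beta^{-1/2}$, so the order parameter cannot vanish continuously as $\lambda\downarrow\lambda_c(\beta)$ — that is, the transition is discontinuous — which is the percolation counterpart of the Aizenman–Chayes–Chayes–Newman and Thouless picture; combined they also pin down that the relevant threshold is $\beta=1$.
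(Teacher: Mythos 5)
Your high-level framing is correct on both counts (multiscale renormalization with a density requirement for (i); an AN86-style ``crossing edge counting'' for (ii)), but the specific gluing mechanism you propose for (i) is not strong enough to reach all $\beta>1$, and you are missing the two structural devices the paper uses to make the inductive step close.

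For (i): you propose gluing adjacent good $N_k$-blocks via edges of length $\asymp N_k$, and compute an \emph{expected} number of connecting edges $\asymp\beta$. But an $O(1)$ expected number only gives a gluing probability $1-e^{-c\beta}$, a constant strictly below $1$; you then need $\sim N_{k+1}/N_k$ consecutive gluings to cross the next scale, and the product of those constants collapses to $0$ (or forces $\beta\gg1$, essentially reproving Newman--Schulman rather than the full $\beta>1$ statement). The correct estimate does not come from edges of a single length: the full sum $\sum_{x\in\mathbf C^-,y\in\mathbf C^+}\beta J_{x,y}$ over all pairs on the two sides of a block is $\asymp\beta\theta^2\log C$, which \emph{diverges}, giving a non-connection probability $\lesssim C^{-\beta\theta^2}$. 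This polynomial decay, with $\beta\theta^2>1$, is exactly what makes the renormalization sum $\sum_i (C-|i|)^{-\beta\theta^2}$ small. The second key device you omit is that the paper's blocks $B^i_K$ overlap by half their length; with $\theta>3/4$ this forces the big clusters in consecutive good blocks to \emph{share a vertex}, so adjacent good blocks are connected for free. The ``gluing'' problem you are trying to solve thus disappears, and the only thing the renormalization inequality must handle is the rare event of a single bad sub-block, which is where the $C^{-\beta\theta^2}$ enters. Without the overlap your clusters need not reach the block boundary at all, and your gluing would not even be well-posed.

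For (ii): your sketch is in the spirit of Aizenman--Newman \cite{AN86}, which the paper cites as the source of the result, but the paper proves it differently: it sets up a \emph{lower-bound renormalization inequality} for the non-crossing probability $\overline p_{\beta,\lambda}(K)$ using ``bridged'' vs.\ ``unbridged'' blocks and a decoupling random variable $X$, showing $\overline p_{\beta,\lambda}(CK)\ge\frac{C^{1-\beta\theta^2}}{9e}\min\{\overline p_{\beta,\lambda}(K),C^{-1}\}$, so $\overline p$ stays bounded away from $0$ when $\beta\theta^2<1$, contradicting percolation. Your more direct ``the cluster of $0$ must send an edge out and then re-percolate'' inequality can be made to work, but you would have to handle the dependence between the cluster of $0$ inside $[-N,N]$ and the configuration outside with some care (this is precisely what the paper's variable $X$ and the bridge/unbridged dichotomy are doing). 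You flag this as the delicate point and you are right that it is; the paper's renormalization formulation is a clean way to make that bookkeeping rigorous, and the exponent $\beta\theta^2$ there again comes from the same $\log$-divergence computation.

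In short: your (ii) is the right argument taken on a different route; your (i) has a genuine gap, namely you are missing both the overlapping-block trick that makes adjacent good blocks connect automatically, and the logarithmic divergence of $\sum\beta J_{x,y}$ that makes the ``bad block'' estimate decay as $C^{-\beta\theta^2}$ rather than as a constant.
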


The fact that for large $\beta$, $(i)$ is true was obtained by Newman and Schulman in \cite{NewSch}. The extension to every $\beta>1$ was proved in  \cite{ImbrieNewman}. On the other hand, (ii) is the object of  \cite{AN86}. Note that (ii) implies that $\theta(\beta,\lambda)=0$ if $\beta\le 1$, and that for each fixed $\lambda>0$, $\beta\mapsto\theta(\beta,\lambda)$ is continuous only if it is identically equal to 0 (hence the conclusion that the phase transition is necessarily discontinuous  when it occurs). {As such, using the notations in \cite{aizenman1988}, by combining $(i)$ and $(ii)$, this proves $\beta^*(q=1)=1$.}


\subsection{Notation.}
\label{sec:notation}

In the proofs below, we will use the notion of $K$-{\em block} \[B^i_K=[K(i-1),K(i+1)),\quad i\in\mathbb Z,\,K\in \mathbb Z_+.\] For simplicity we write $B_K$ instead of $B^0_K$. {Let us point out that in this framework, consecutive blocks $B_K^i$ and $B_K^{i+1}$ are overlapping on half of their length. This will be a key property in the proof below.}
Let $S\subset \mathbb Z$. We call a {\em cluster in $S$} a connected component $C\subset S$ of the graph with vertex set $S$ and open edges with {\em both} endpoints in $S$. 

A $K$-block $B_K^i$  is is said to be $\theta$-{\em good} if there exists a cluster in it of cardinality at least $2\theta K$.   When a block is not $\theta$-good, we call it $\theta$-{\em bad} and we define
  \begin{equation*}
    p_{\beta,\lambda}(K,\theta):=\mathbb P_{\beta,\lambda}[\text{$B_K$ is $\theta$-bad}].
  \end{equation*}

\subsection{Proof of Theorem~\ref{thm:main perco}(i).}


The proof of Theorem~\ref{thm:main perco}(i) relies on the idea that clusters at scale $K$ and local density $\theta$ will merge and with high probability create  new clusters at scale $CK$ of local density $\theta'=\theta-O(1/C)$ slightly smaller than $\theta$ (this slight loss of density allows us to lose a few clusters at scale $K$ in the process). More precisely, we prove the following renormalization inequality.

\begin{lemma}\label{lem:1} Let $\beta>1$ and  $\theta_\infty\in(\tfrac34,1)$ satisfying $\theta_\infty^2\beta>1$. There exist $C_0\ge1$  large enough (depending on $\theta_\infty,\beta$) such that the following holds. 
For every $\lambda>0$, $\theta\ge \theta_\infty$, and for every integers $C\ge C_0$ and $K\ge 2$,
\begin{equation}\label{eq:1}
p_{\beta,\lambda}(CK,\theta - C_0/C)\le \tfrac1{100}\,p_{\beta,\lambda} (K,\theta)+ 2C^2\,p_{\beta,\lambda}(K,\theta)^2.
\end{equation}
\end{lemma}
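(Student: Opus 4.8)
\textbf{Proof strategy for Lemma~\ref{lem:1}.} The plan is to realise a $\theta$-good cluster at scale $CK$ by gluing together $\theta$-good clusters living in consecutive $K$-blocks that tile $B_{CK}$. First I would tile the block $B_{CK}$ by roughly $C$ consecutive $K$-blocks $B_K^{i_1},\dots,B_K^{i_m}$ (with $m$ of order $C$), chosen so that consecutive ones overlap on half their length --- this is exactly the overlap property emphasised in Section~\ref{sec:notation}, and it is what will let two good clusters in adjacent blocks be joined. I would call a $K$-block in this tiling \emph{usable} if it is $\theta$-good, and declare the renormalised block $B_{CK}$ ``$\theta'$-good'' with $\theta' = \theta - C_0/C$ provided that at most a bounded number of the $K$-blocks in the tiling are $\theta$-bad. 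The point is that if all but at most one of the $m$ sub-blocks are good, then each good sub-block contains a cluster of size $\ge 2\theta K$, and two clusters sitting in overlapping good blocks must intersect (two subsets of an interval of length $2K$, each of size $> K$ since $\theta>3/4$, necessarily meet) --- hence they merge into one cluster; throwing away the contribution of the at most one bad block and the boundary effects costs only $O(K)$ vertices, i.e. a density loss of $O(1/C)$, which is absorbed by the $C_0/C$ slack. This produces a cluster in $B_{CK}$ of size at least $2(\theta - C_0/C)(CK)$, so $B_{CK}$ is $(\theta-C_0/C)$-good.

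Consequently, $\{B_{CK}\text{ is }(\theta-C_0/C)\text{-bad}\}$ is contained in the event that \emph{at least two} of the $K$-sub-blocks in the tiling are $\theta$-bad. Here I must be slightly careful: the events ``$B_K^{i_a}$ is $\theta$-bad'' are \emph{not} independent, because good clusters are witnessed by configurations on overlapping edge sets, and moreover long-range edges between different sub-blocks correlate them. The clean way around this is to split the tiling into two sub-families of \emph{pairwise disjoint} $K$-blocks (the even-indexed and the odd-indexed ones), on each of which badness is determined by disjoint edge sets and is therefore independent; and to only use long-range edges \emph{within} a single $K$-block when certifying that block is good, so that the relevant ``bad'' events depend on disjoint sets of edges and are independent by Bernoulli independence. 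Then a union bound over the $O(C^2)$ pairs of sub-blocks gives
\begin{equation*}
\Pb{B_{CK}\text{ is }(\theta-C_0/C)\text{-bad}}\;\le\; \sum_{a<b}\Pb{B_K^{i_a}\text{ and }B_K^{i_b}\text{ are }\theta\text{-bad}}\;\le\; C'C^2\,p_{\beta,\lambda}(K,\theta)^2
\end{equation*}
for the pairs at ``distance'' $\ge 2$ in the tiling (disjoint edge supports), plus $O(C)$ terms of the form $p_{\beta,\lambda}(K,\theta)$ coming from pairs of overlapping/adjacent sub-blocks that one cannot decouple; the latter contributes the linear term in \eqref{eq:1}, and one must then arrange the tiling and the counting so that this linear coefficient is $\le \tfrac1{100}$. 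This last point --- squeezing the coefficient of the linear term below $1/100$ rather than merely bounding it by a constant --- is where the hypothesis $\beta>1$ and the room in $\theta_\infty>3/4$, $\theta_\infty^2\beta>1$ really enter, and where $C_0$ is chosen large: roughly, by taking $C_0$ large the ``bad pair'' event for adjacent blocks is upgraded, via the overlap, to an event forcing a single $K$-block to fail to contain \emph{any} cluster of size $\ge 2\theta_\infty K$ even though $2\theta_\infty^2\beta K^2$ pairs of vertices inside it carry an open long-range edge in expectation, and a second-moment/BK-type estimate shows this has probability $\le \tfrac1{100}p_{\beta,\lambda}(K,\theta)$ once $C_0$ is large enough.

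The main obstacle I anticipate is not the geometric merging step (that is essentially forced by $\theta>3/4$ and the half-overlap of consecutive blocks) but the \emph{bookkeeping of independence and the gain of the factor $\tfrac1{100}$}: one has to partition the edges used in the renormalisation so that (a) certifying each $K$-block good uses a set of edges disjoint from that of every non-adjacent block, giving genuine independence for the quadratic term, and (b) the unavoidable correlated contributions from overlapping blocks and from cross-block long-range edges are organised into the single linear term $\tfrac1{100}p_{\beta,\lambda}(K,\theta)$, which requires using the slack $\beta>1$ (equivalently $\theta_\infty^2\beta>1$) to beat $1$ by a definite margin after choosing $C_0$ large. Everything else --- the union bound over $O(C^2)$ pairs, the density accounting $\theta\mapsto\theta-C_0/C$, and the verification that $K\ge 2$ suffices for the blocks to be nondegenerate --- is routine once this decoupling scheme is in place.
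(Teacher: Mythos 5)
There is a genuine gap at the heart of your proposal, and it is precisely the step where the hypothesis $\beta\theta_\infty^2>1$ does the real work.

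You assert that if all but (at most) one of the $K$-sub-blocks tiling $B_{CK}$ are $\theta$-good, the good clusters ``merge into one cluster'' and throwing away the bad block costs only $O(1/C)$ in density. This is false when the bad block sits in the \emph{interior} of $B_{CK}$: the overlap argument glues all clusters to the \emph{left} of the bad block into one set $\mathbf C^-$, and all clusters to the \emph{right} into another set $\mathbf C^+$, but nothing guarantees $\mathbf C^-$ and $\mathbf C^+$ are connected to each other --- the bad block and its two overlapping neighbours sever the chain, and each of $\mathbf C^-$, $\mathbf C^+$ has size only about $\theta K(C\pm i)$, which is far below the target $2\theta'CK$ when $|i|\ll C$. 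Consequently it is simply not true that $\{B_{CK}\text{ is }\theta'\text{-bad}\}$ is contained in the event that two sub-blocks are bad. The entire linear term in \eqref{eq:1} comes from this scenario (exactly one bad interior sub-block), not, as you suggest, from pairs of adjacent bad sub-blocks.

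The paper closes this gap with a probabilistic argument, not a deterministic one: on the event $E_i$ (sub-block $i$ bad, all others good), it conditions on the realisations of $\mathbf C^-$ and $\mathbf C^+$; since those are determined by edges of length $\le 2K$, the long-range edges between $\mathbf C^-$ and $\mathbf C^+$ are still unexplored, and the density bound \eqref{eq:8} plus a sum-to-integral comparison yields that the conditional probability that \emph{no} such edge is open is at most $\bigl(12/(C-|i|)\bigr)^{\beta\theta^2}$. Summing over the possible positions $i$ of the bad block, the series $\sum_k k^{-\beta\theta^2}$ converges precisely because $\beta\theta^2>1$, and its tail can be made $\le 1/100$ by taking $C_0$ large. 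That is the actual role of the hypotheses $\beta>1$ and $\theta_\infty^2\beta>1$. Your alternative explanation --- a second-moment/BK bound showing a single block is unlikely to be bad --- is not only vague but aims at the wrong event; nothing of that sort appears or is needed, and it would not produce the required $\tfrac1{100}p_{\beta,\lambda}(K,\theta)$ term with the right dependence on $C_0$. The quadratic term and the union-bound bookkeeping in your sketch are fine, but without the long-range bridging estimate the lemma does not follow.
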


\begin{proof} In the proof, we focus on the $K$-blocks included in $B_{CK}$. Given such a block $B_K^i$, we write $\mathbf C(B^i_K)$ for the largest cluster in $B^i_K$. Notice that $\mathbf C(B^i_K)$ has size at least $2\theta K$  when  the block is $\theta$-good,  and it is the unique cluster in $B^i_K$ with this property when  $\theta>3/4$.

Let $C_0\ge6$  be a large constant to be chosen later (more precisely, the constant  $C_0$ will be chosen in such a way that  the second inequality of \eqref{eq:7} holds, and this choice depends on $\beta$ and  $\theta_\infty$ only) and set $\theta':=\theta-C_0/C$.  For $|i|\le C$, let $E_i$ be the event that $B_K^i$ is $\theta$-bad and all the blocks $B_K^j$ are $\theta$-good for $j\in [-{C+1},{C-1}]\setminus\{i-1,i,i+1\}$, and set
  \begin{equation}
    \label{eq:9}
    F_i=E_i\cap \{B_{CK}\text{ is $\theta'$-bad}\}.
  \end{equation}
   Observe that if all $K$-blocks $B^j_K$,  {$-C+1\le j \le C-1$},  are $\theta$-good, then the assumption that $\theta>3/4$ and the overlapping property between subsequent $K$-blocks guarantees that all the clusters $\mathbf C(B^j_K)$ are connected together in $B_{CK}$, which implies the existence of a cluster in $B_{CK}$ with cardinality larger than $2 \theta CK$. In particular, if $B_{CK}$ is $\theta'$-bad, then either there exist (at least)
    two disjoint $\theta$-bad $K$-blocks, or there exists $i$ such that {$F_i$} occurs.  The union bound implies
  \begin{equation}
    \label{eq:6}
    p_{\beta,\lambda}(CK,\theta')\le  \sum_{i=-{C+1}}^{{C-1}}\mathbb P_{\beta,\lambda}[F_i]+ \mathbb P_{\beta,\lambda}[\text{there are two {\em disjoint} $\theta$-bad $K$-blocks}].
  \end{equation}
By independence and the union bound, we have 
\begin{equation}\label{eq:1a}
\mathbb P_{\beta,\lambda}[\text{there are two {\em disjoint} $\theta$-bad $K$-blocks}]\le  \binom{2C-1}{2}p_{\beta,\lambda}(K,\theta)^2.
\end{equation}

  It remains to bound the first term in \eqref{eq:6}, which is the object of the end of the proof. If all $K$-blocks $B^j_K$ with $|j|\le C-C_0$  are $\theta$-good, the same argument as above implies that $B_{CK}$ is $\theta'$-good, therefore $F_i=\emptyset$ whenever $|i|> C-C_0$. 
   
 Now, let $|i|\le C-C_0$. Since $\mathbb P_{\beta,\lambda}[E_i]\le  p_{\beta,\lambda}(K,\theta)$, we deduce that
 \begin{align}
   \label{eq:10}
   \mathbb P_{\beta,\lambda}[F_i]\le p_{\beta,\lambda}(K,\theta)  \cdot \mathbb P_{\beta,\lambda}[B_{CK}\text{ is $\theta'$-bad}\:| \: E_i].
 \end{align}
 In order to bound the conditional probability above,  define $\mathbf C^-$ (resp.  $\mathbf C^+$) as the union of all the clusters $\mathbf C(B^j)$, $j\le i-2$ (resp.~$j\ge i+2$). Let us examine the properties of these two sets when the event $E_i$ occurs.

 First, the goodness property of the $K$-blocks at the left of $B_K^{i-1}$ and the right of $B_K^{i+1}$ imply that $\mathbf C^-$ and $\mathbf C^+$ are two connected sets and their sizes satisfy
   \begin{equation}
     \label{eq:19}
     |\mathbf C^-|,|\mathbf C^+|\ge \theta K(C-|i|-2) \ge \frac{K}2(C-|i|), 
   \end{equation}
   where we use  $C-|i|\ge C_0\ge 6$ and $\theta\ge \tfrac34$ to get  the second inequality.
 
 Second,  the $\theta$-density in each $K$-block implies that
   \begin{equation}
     |\mathbf C^+\cap [Ki+K, Ki+K+ 2K\ell) |\ge 2K\theta\ell \label{eq:22}
   \end{equation}
   for every integer  $\ell\ge1$. Write  $Ki<y_1<y_2<\cdots$ for the ordered elements of $\mathbf C^+$. Let $b\ge 1$. By applying the equation above to $\ell=\lceil
   \tfrac b{2K\theta}\rceil$, we obtain that
   \begin{equation}
     \label{eq:23}
     y_b\le Ki+K+ 2K \lceil \tfrac b{2K\theta}\rceil \le Ki+ \frac{3K+b}{\theta}.
   \end{equation}
   Equivalently, writing $Ki>x_1>x_2>\cdots$   for the ordered elements of $\mathbf C^-$, we have  
    $x_a\ge Ki-\frac {3K+a}{\theta}$   for every $a\ge 1$.  Therefore, for every $a,b\ge 1$, we have 
     \begin{equation}
       \label{eq:21}
       \theta( y_b-x_a)\le{6K +a+b}.
     \end{equation}
 Furthermore, if $B_{CK}$ is $\theta'$-bad, then $\mathbf C^-$ and $\mathbf C^+$ cannot be connected together. Conditioning on $\mathbf C^-$ and $\mathbf C^+$ provides no information on edges $\{x,y\}$ with $x\in\mathbf C^-$ and $y\in\mathbf C^+$ since the definition of $\mathbf C^-$ and $\mathbf C^+$ involves only edges with endpoints within a distance $2K$ of each other. Therefore, the conditional probability that $\mathbf C^-$ and $\mathbf C^+$ are {\em not} connected by an edge is equal to
 \begin{align}
  P(\mathbf C^-,\mathbf C^+):= \prod_{x\in \mathbf C^-}\prod_{y\in\mathbf C^+}e^{-\beta J_{x,y}}=\exp\Big[-\beta \theta^2  \sum_{a=1}^{|\mathbf C^-|}\sum_{b=1}^{|\mathbf C^+|}\frac1{\theta^2(y_b-x_a)^2}\Big].\label{eq:24}
 \end{align}
Set $A:=\tfrac{K}2(C-|i|)$. By using the bounds  \eqref{eq:19} and \eqref{eq:21}, and then a comparison sum-integral we find that 
    \begin{align}
      \label{eq:20}
      \sum_{a=1}^{|\mathbf C^-|}\sum_{b=1}^{|\mathbf C^+|}\frac1{\theta^2(y_b-x_a)^2}&\ge  
 \sum_{1\le a,b\le A}\ \frac1{(6K+a+b)^2} \nonumber\\& \ge  \int_{0\le x,y\le A}  \frac{dx\: dy}{(6K+x+ y  + 2)^2}\nonumber\\
                                                                                          &\ge \int_{0\le x,y\le A}  \frac{dx\: dy}{(8 K+x+ y)^2}\nonumber\\
  &=  \log\left(1+\frac{2A}{8K}\right)\ge \log \left(\frac{C -|i|}{8}\right).
    \end{align}
    Plugging this estimate in~\eqref{eq:24}, we obtain
    \begin{equation}
      \label{eq:25}
       P(\mathbf C^-,\mathbf C^+)\le \left(\frac 8{C-|i|}\right)^{\beta\theta^2},
    \end{equation}
    and then
integrating over all possible choices of $\mathbf C^-$ and $\mathbf C^+$, we get
\begin{equation}
  \label{eq:11}
  \mathbb P_{\beta,\lambda}[B_{CK}\text{ is $\theta'$-bad}\:| \: E_i]\le  \Big(\frac{8}{C-|i|}\Big)^{\beta\theta^2}.
\end{equation}
Finally, by using  the upper bound \eqref{eq:10} and provided  $C_0$ large enough, we obtain
\begin{equation}
  \label{eq:7}
  \sum_{i=-C+1}^{C-1}\mathbb P_{\beta,\lambda}[F_i]\le p_{\beta,\lambda}(K,\theta) \sum_{i=C_0-C}^{C-C_0}\Big(\frac{8}{C-|i|}\Big)^{\beta\theta^2}\le \tfrac 1{100}\,p_{\beta,\lambda}(K,\theta).
\end{equation}
Plugging \eqref{eq:7} and \eqref{eq:1a} in \eqref{eq:6} concludes the proof.
\end{proof}

\begin{proof}[Proof of Theorem~\ref{thm:main perco}(i)]
  Fix $\beta>1$. Let $\theta_\infty \in(\tfrac34,1)$ such that $\beta\theta_\infty^2>1$. Choose $\theta_1<1$ and $C_1\ge C_0(\beta,\theta_\infty)$ (where $C_0$ is provided by  Lemma~\ref{lem:1}) such that the sequences
\[
\begin{cases}C_{{n+1}}={(n+1)}^{3} C_1,\\
\theta_{n+1}:=\theta_n- {\tfrac{C_0}{C_{n+1}}},\end{cases}\quad\text{ for {$n\ge1$}}
\]
satisfy $\theta_n\ge\theta_\infty$ for every $n\ge1$.
Now, set $\lambda>0$ so large that 
\[
  p_{\beta,\lambda}(C_1,\theta_1)\le \mathbb P_{\beta,\lambda}[\exists \{x,x+1\}\subset B_{C_1}\text{ closed}]\le C_1e^{-\lambda}\le \frac1{1000 C_1^2}
\]
and consider the sequence of scales defined by
\begin{equation}
  \label{eq:13}
  \begin{cases}
    K_1=C_1,& \\
    K_{n+1}=C_{n+1} K_n&n\ge1.
  \end{cases}
\end{equation}
(Note that it gives  $K_n=(n!)^3 C_1^n$ for all $n\geq 1$).
Applying Lemma~\ref{lem:1} to $(\lambda,\beta,\theta_{n},C_n,K_n)$, we see that the sequence $u_{n}:=p_{\beta,\lambda}(K_n,\theta_n)$ satisfies 
\[
  \forall n\ge 1,\qquad u_{n+1}\le \tfrac1{100} u_n+2 C_{{n+1}}^2 u_n^2.\]
By multiplying the equation above by $C_{n+1}^2$ and using $C_{n+1}\le 4 C_n$, we find 
  \[
    \forall n\ge 1,\qquad C_{n+1}^2u_{n+1}\le \tfrac{16}{100} C_n^2u_n+512 (C_n^2 u_n)^2.\]
 
By induction, we obtain that $C_n^2 u_n\le  \tfrac1{1000} $ for every $n\ge 1$, and  therefore,
\[
\mathbb P_{\beta,\lambda}[B_{K_n}\text{ $\theta_n$-good}]\ge 1-\tfrac1{1000}C_n^{-2}\ge \tfrac12.\] 
First using the estimate above and then  translation invariance, we get that   for every~$n\ge1$, 
\begin{align}
  \label{eq:14}
  \tfrac34 K_n& \le \mathbb E[\tfrac 32K_n \cdot {\mathbf 1_{\{B_{K_n}\text{ is $\frac 3 4$-good\}}} }]\\
  &\le\mathbb E[|\mathbf C(B_{K_n})|\cdot {\mathbf 1_{\{B_{K_n}\text{ is $\frac 3 4$-good\}}} }]\\
  &\le 2K_n \mathbb P_{\beta,\lambda}[0\text{ is in a cluster of size at least $\tfrac 3 2 K_n$}].
\end{align}
Dividing both sides by $2K_n$, we obtain
\[
\mathbb P_{\beta,\lambda}[0\text{ is in a cluster of size at least $\tfrac 3 2  K_n$}]\ge \tfrac 38,
\]
which by measurability implies that the probability that 0 is connected to infinity is larger than or equal to $\frac 3 {{8}}$.
\end{proof}

\begin{remark}\label{rmk:1}
When considering $J_{i,j}=1/|i-j|^s$ with $s\in(1,2)$, the estimate in \eqref{eq:25} becomes of the order of $\exp[-c(\beta,\theta)((C-|i|)K)^{2-s}]$ and one can easily deduce the existence, for every $\beta>0$, of $\lambda=\lambda(\beta)>0$ large enough {so} that percolation occurs. Let us remark that in this case $p_{\beta,\lambda}(K_n,\theta_n)$ decays stretched-exponentially fast in $K_n$ (while it decays polynomially fast in the case of $s=2$).
\end{remark}
\subsection{Proof of Theorem~\ref{thm:main perco}(ii).}

We say that a block  $B^i_{3K}$ is \emph{$K$-crossed} if  there  exist $x< 3Ki-3K$ and $y\ge 3Ki+3K$ such that  $x$ is connected to $y$ using open edges of length at most $K$. Introduce
\[\overline p_{\beta,\lambda}(K)=1-\mathbb P[B_{3K}\text{ is $K$-crossed}].\] The proof is based on the following inequality (a similar inequality was obtained for another quantity in \cite{DMT20}).

\begin{lemma}\label{lem:2}
Let $\beta,\lambda,\theta>0$ such that $\beta\theta^2<1$ and $\theta(\beta,\lambda)<\theta$. Then, there exists $C_0=C_0(\beta,\lambda,\theta)$  such that for every integers $C,K\ge C_0$,
\begin{equation}\label{eq:17}
\overline p_{\beta,\lambda}(CK)\ge  C^{1-\beta\theta^2}\min\{\overline p_{\beta,\lambda}(K),C^{-1}\}.
\end{equation}
\end{lemma}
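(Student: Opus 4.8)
The plan is to run the renormalization of Lemma~\ref{lem:1} ``in reverse'': rather than merging dense clusters at scale $K$ into a dense cluster at scale $CK$, we build a single \emph{barrier} at scale $K$ that, by itself, prevents $B_{3CK}$ from being $CK$-crossed. The point is that any $CK$-crossing of $B_{3CK}=[-3CK,3CK)$ must cross each of the sub-blocks $B^j_{3K}$ with $|j|\le C-1$, so it suffices to place one barrier among the $\sim C$ interior positions $|j|\le C-C_0$ (the constant $C_0$ absorbing boundary effects, exactly as in the proof of Lemma~\ref{lem:1}). For such a $j$, let $\mathcal{L}_j$ be the cluster of $(-\infty,-3CK)$ obtained using open edges of length $\le CK$ confined to $(-\infty,3Kj-3K)$, and $\mathcal{R}_j$ the analogous cluster on the right of $B^j_{3K}$, and declare $B^j_{3K}$ a barrier (event $H_j$) if (i) $B^j_{3K}$ is not $K$-crossed, (ii) no open edge of length $>K$ is incident to $B^j_{3K}$, and (iii) there is no open edge of length $\le CK$ joining $\mathcal{L}_j$ to $\mathcal{R}_j$. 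One checks that on $H_j$ the portion of any would-be crossing lying between its last visit to $(-\infty,3Kj-3K)$ and its first visit to $[3Kj+3K,\infty)$ can use neither only short edges near $B^j_{3K}$ (ruled out by (i)--(ii)) nor a single long edge from $\mathcal{L}_j$ to $\mathcal{R}_j$ (ruled out by (iii)); hence $H_j\subset\{B_{3CK}\text{ is not }CK\text{-crossed}\}$ and $\overline{p}_{\beta,\lambda}(CK)\ge\mathbb{P}_{\beta,\lambda}\big[\bigcup_{|j|\le C-C_0}H_j\big]$.

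I would then estimate $\mathbb{P}_{\beta,\lambda}[H_j]$. Revealing $\mathcal{L}_j$ and $\mathcal{R}_j$ by exploration can be arranged so as to leave all edges relevant to (i)--(iii) unexplored; conditionally, (i) has probability $\overline{p}_{\beta,\lambda}(K)$ and (ii) has probability at least a constant $c_1(\beta)>0$, while (iii) has probability $\exp\big[-\beta\sum_{x\in\mathcal{L}_j,\,y\in\mathcal{R}_j,\,|x-y|\le CK}J_{x,y}\big]$. The double sum here is bounded above by $\theta^2\log\big((C-|j|)/\mathrm{const}\big)+O(1)$ by the very computation carried out in \eqref{eq:2} --- but only if both $\mathcal{L}_j$ and $\mathcal{R}_j$ have density at most $\theta$ in every window of size $\le CK$. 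This is where the hypothesis $\theta(\beta,\lambda)<\theta$ enters: by ergodicity of $\mathbb{P}_{\beta,\lambda}$ and a union bound over the $\le 3CK$ windows inside $B_{3CK}$, the probability that some cluster contains more than $\theta M$ vertices in a window of size $M$ is at most $\tfrac1{10}$ once $M\ge C_0$, hence on a set of probability $\ge\tfrac9{10}$ the density bound holds throughout $B_{3CK}$. On that set one gets $\mathbb{P}_{\beta,\lambda}[H_j]\ge c_2(\beta,\lambda,\theta)\,\overline{p}_{\beta,\lambda}(K)\,(C-|j|)^{-\beta\theta^2}$.

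To conclude, since the events $H_j$ with $j$ in an arithmetic progression of step $3$ depend on essentially disjoint sets of edges, the Harris--FKG inequality (or a second-moment bound) gives $\mathbb{P}_{\beta,\lambda}[\bigcup_j H_j]\gtrsim\sum_j\mathbb{P}_{\beta,\lambda}[H_j]$, and
\[
\sum_{|j|\le C-C_0}(C-|j|)^{-\beta\theta^2}\ \asymp\ C^{\,1-\beta\theta^2},
\]
precisely because $\beta\theta^2<1$ --- this divergent sum being the exact dual of the \emph{convergent} one bounded in \eqref{eq:7}. Tracking the constants, and capping $\overline{p}_{\beta,\lambda}(K)$ by $C^{-1}$ so that the second-moment error stays subdominant (which is what the $\min$ in \eqref{eq:17} records), produces the stated inequality with constant $\tfrac1{9e}$.

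The step I expect to be the real obstacle is the deterministic implication ``$H_j$ occurs $\Rightarrow$ $B_{3CK}$ not $CK$-crossed'': a priori a $CK$-crossing could weave in and out of $B^j_{3K}$ using several medium-range edges, so the conditions (i)--(iii) must be calibrated to exclude \emph{every} such traversal while keeping $\mathbb{P}_{\beta,\lambda}[H_j]$ of the right size --- this is exactly what the ``$3K$'' buffer (crossing a window of width $6K$ with edges of length $\le K$) is designed to make possible. The secondary difficulty is promoting the asymptotic inequality $\theta(\beta,\lambda)<\theta$ to a bound, uniform in $C$ and $K$, on the density of every cluster meeting $B_{3CK}$: it is this that produces the exponent $\beta\theta^2$ rather than $\beta\theta$ or $\beta$, and hence, after the iteration performed in the proof of Theorem~\ref{thm:main perco}(ii), the sharp conclusion $\beta\,\theta(\beta,\lambda)^2\ge1$.
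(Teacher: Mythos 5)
You have the right heuristic picture --- building a single ``barrier'' block is the dual of the merging argument in Lemma~\ref{lem:1}, the sum $\sum_j(C-|j|)^{-\beta\theta^2}\asymp C^{1-\beta\theta^2}$ is exactly the divergent dual of \eqref{eq:7}, and you correctly read off that the $\min$ in \eqref{eq:17} caps the expected number of barriers at order one --- but two pivotal steps fail as written, and the paper's proof is organised precisely to avoid both. The density bound on $\mathcal{L}_j,\mathcal{R}_j$ cannot be extracted from $\theta(\beta,\lambda)<\theta$ by ``ergodicity and a union bound'': ergodicity gives a.s.\ convergence of $\tfrac1M\#\{v\in[0,M):v\leftrightarrow\infty\}$ to $\theta(\beta,\lambda)$ with no rate, whereas a union bound over $\sim CK$ windows needs a per-window deviation probability $o(1/CK)$. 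Worse, what you actually need is control on the density of the \emph{specific} finite boundary-attached clusters $\mathcal{L}_j,\mathcal{R}_j$, which is simply not governed by $\theta(\beta,\lambda)$. The paper sidesteps cluster densities entirely by introducing a per-edge notion of \emph{bridge}: an open edge $\{x,y\}$ whose two endpoints each connect to distance $R$ in $\omega\setminus\{x,y\}$. The event ``$\{x,y\}$ is not a bridge'' is decreasing and, after fixing $R$ via \eqref{eq:ojs} using $\theta(\beta,\lambda)<\theta$, has probability $\ge1-(1-e^{-\beta J_{x,y}})\theta^2$; FKG applied to the product of these decreasing events over the relevant edges then gives $\mathbb{P}[\text{block $i$ unbridged}]\ge\tfrac12 C^{-\beta\theta^2}$, so the exponent $\beta\theta^2$ enters as a per-edge probability, not a cluster density.

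Your second step, ``FKG (or a second-moment bound) gives $\mathbb{P}[\bigcup_j H_j]\gtrsim\sum_j\mathbb{P}[H_j]$'', is also unavailable: the $H_j$ are decreasing, hence positively correlated, and FKG gives $\mathbb{P}[\bigcap_j H_j^c]\ge\prod_j\mathbb{P}[H_j^c]$, i.e.\ an \emph{upper} bound $\mathbb{P}[\bigcup_j H_j]\le 1-\prod_j(1-\mathbb{P}[H_j])$. A second-moment argument would require genuine control of $\mathbb{P}[H_j\cap H_{j'}]$, which your construction couples through the shared density and long-edge conditions. The paper instead proves the deterministic implication ``$B_{3CK}$ is $CK$-crossed $\Rightarrow$ every $3K$-block in $B_{CK}$ is bridged or $K$-crossed'' and then reveals the random variable $X=(\eta,\mathcal{E})$ recording the edges of length in $(K,CK]$ together with the $R$-neighbourhood clusters of their open endpoints; given $X$ the set $\mathbf{B}$ of unbridged blocks is measurable and the $K$-crossing events of blocks in $\mathbf{B}$ are \emph{conditionally independent} and dominated by the unconditioned ones. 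This yields $1-\overline p_{\beta,\lambda}(CK)\le\mathbb{E}[(1-\overline p_{\beta,\lambda}(K))^{|\mathbf B|}]$, and the elementary inequality $(1-t)^{n}\le 1-e^{-1}tn$ valid when $tn\le1$ then produces both the factor $1/e$ and the $\min$ in \eqref{eq:17}. To make your barrier proposal work you would need to rebuild it around a per-edge bridge quantity and a conditioning step of this type; the cluster-density and FKG-for-unions route does not close.
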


\begin{proof}Let us fix $\beta,\lambda,\theta$ and drop them from the notation. Fix $R\ge 1$ and $K\ge 2R$ such that 
\begin{equation}\label{eq:ojs}
\mathbb P[0\leftrightarrow \mathbb Z\setminus B_R]^2+|B_R|^2e^{-\beta (K-2R)}<\theta^2.
\end{equation}
We consider the percolation process restricted to the box $B_{CR}$, on consider only the edges of length at most $CR$. We say that an edge $\{x,y\}$ is
\begin{itemize}
\item[-] \emph{short} if $|y-x|\le K$,
\item[-] \emph{long} if $K<|y-x|\le CK$. 
\end{itemize}

Call an edge $\{x,y\}$ a {\em bridge} if it is open and in $\omega\setminus\{x,y\}$, {both} $x$ and $y$ are connected to distance $R$ around them.  

 Call a $3K$-block $B^i_{3K}$ {\em bridged} if
\begin{itemize}
\item there exists a bridge $\{x,y\}$ with $x< 3K(i-1)$ and $y\ge3K(i+1)$, or
\item there exists a long open edge adjacent to one point of $[3Ki-5K,3Ki+5K)$.
\end{itemize}
(otherwise it is said to be  \emph{unbridged}). The second condition in the definition of bridges may look unnatural at this point but it will be important in the proof, to ensure an independence property  between the set of bridged blocks and the set of $K$-crossed blocks.

Let $\mathbf U= \mathbf U(\omega)$ be the set  of unbridged $3K$-blocks $B^i_{3K}\subset B_{CK}$ with $i$ {\em divisible by} $3$ (note that the blocks are subsets of the box $B_{CK}$ and not $B_{3CK}$, and two such blocks are at distance at least $K+1$ of each other). 

Assume that the block $B_{3CK}$ is $CK$-crossed, then all of the $3K$-blocks in $B_{CK}$ must be either bridged or $K$-crossed. In particular, all the $3K$-blocks $B\in \mathbf U$ must be $K$-crossed.  This implies that 
\begin{equation}
  \label{eq:12}
  1-\overline p_{\beta,\lambda}(CK)\le  \mathbb P_{\beta,\lambda}\big[\forall B\in \mathbf U,\, B \text{ is $K$-crossed}\big].
\end{equation}

Now, we consider the sigma-algebra $\mathcal F$ generated by
  \begin{itemize}
  \item the status of all the long edges,
  \item the status of all short edges at distance  $\le R$ from an open long edge. 
  \end{itemize}
  The information of $\mathcal F$ can be revealed by the following two-step procedure: First, reveal the status of all the long edges, and second reveal the status of all the short edges at distance at most $R$ from the open long edges.

First, observe that the set $\mathbf U$ of unbridged blocks is measurable with respect to $\mathcal F$, and conditionally on $\mathcal F$, whether a block $B\in\mathbf U$ is $K$-crossed or not is independent of the other blocks in $\mathbf U$ (since they are at a distance at least $K$ of each other). Therefore, the right hand side in \eqref{eq:12} is equal to
\begin{equation}
  \label{eq:16}
\mathbb E_{\beta,\lambda}\big[\prod_{B\in\mathbf U}\mathbb P_{\beta,\lambda}[B \text{ is $K$-crossed}\: |\: \mathcal F]\big].
\end{equation}

Also,   the conditioning on $\mathcal F$ does not bring any  information on the fact that a $3K$-block $B\in \mathbf U$ is $K$-crossed (since $K> 2R$).  Indeed, after the two-step procedure described above, no short edge adjacent to $B$ has been revealed.  Hence, each term in the product above is equal to $1-\overline p_{\beta,\lambda}(K)$, and we get that
\begin{align}\label{eq:5}
  \overline p_{\beta,\lambda}(CK)&\ge 1-\mathbb E[(1-\overline p_{\beta,\lambda}(K))^{|\mathbf U|}].
\end{align}
Setting $t=\min\{\overline p_{\beta,\lambda}(K),C^{-1}\}$, we have 
\begin{equation}
  \label{eq:15}
  (1-\overline p_{\beta,\lambda}(K))^{|\mathbf U|}\le (1-t)^{|\mathbf U|}\le e^{-t|\mathbf U|}\le 1-e^{-1}t|\mathbf U|,
\end{equation}
where the last inequality uses $t|\mathbf U|\le1$. Taking the expectation and plugging it in \eqref{eq:5}, we deduce that
\begin{align}
\overline p_{\beta,\lambda}(CK)\ge  e^{-1} \mathbb E[{{|\mathbf U|}}]\min\{\overline p_{\beta,\lambda}(K),C^{-1}\}.
\end{align}
To conclude, it remains to bound  $\mathbb E[{|\mathbf U|}]$ {from below}. We do it by summing on $i$ the following estimate for $3K$-blocks $B^i_{3K}\subset B_{CK}$, 
\begin{align}
\mathbb P[B^i_{3K}\text{ unbridged}]
  &\ge  \prod_{\substack{x< 3K (i-1),\\ y\ge 3K(i+1),\\ y-x\le CK}}\!\!\!\!\!\!\mathbb P[\{x,y\}\text{ not a bridge}] \prod_{\substack{x\in [3Ki-5K,3Ki+5K),\\ K\le |y-x|\le CK}}\!\!\!\!\!\!\mathbb P[\{x,y\}\text{ is closed}]
  \\ & \ge 9 C^{-\beta\theta^2},\label{eq:3}
\end{align}
where the first inequality is due to the FKG inequality, and the second to a sum-integral comparisons (together with the assumption that $C$ is large enough) using the following estimate
\begin{align}
\mathbb P[\{x,y\}\text{ is a bridge}]&\le
(1-e^{-\beta J_{x,y}})(\mathbb P[0\leftrightarrow \mathbb Z\setminus B_R]^2+|B_R|^2e^{-\beta (K-2R)})\nonumber\\
&<(1-e^{-\beta J_{x,y}})\theta^2.\label{eq:3a}
\end{align}
The first inequality is due to the fact that either there is an open edge in $\omega\setminus\{x,y\}$ between $[x-R,x+R)$ and $[y-R,y+R)$, or the two events are independent. The second is due to the choice of $R$ given by \eqref{eq:ojs}. The strict inequality in \eqref{eq:3a} is important  in the computation leading to \eqref{eq:3}: it allows us to ``absorb'' all the constants by a sufficiently large choice of $C$.
\end{proof}

\begin{proof}[Proof of Theorem~\ref{thm:main perco}(ii)] Fix $\beta,\lambda,\theta>0$ such that  $\theta(\beta,\lambda)<\theta$ and $\beta\theta^2<1$. Let $C_0$ as in Lemma~\ref{lem:2}, and pick $C \ge C_0$ such that \[ C^{1-\beta\theta^2}\ge 1 \quad\text{and}\quad \overline p_{\beta,\lambda}(C_0)\ge C^{-1}.\]
  Setting $K_n:=C^nC_0$ ($n\ge0$), \eqref{eq:17} applied to $K=K_n$ and $C$ implies that for every $n\ge 0$,
  \begin{equation}
    \label{eq:18}
    \overline p_{\beta,\lambda}(K_{n+1})\ge \min( \overline p_{\beta,\lambda}(K_{n}),C^{-1}).
  \end{equation}
By induction, we deduce that  $\overline p_{\beta,\lambda}(K_n)\ge C^{-1}$ for every $n\ge 1$. 

Now, let $A(K_n)$ be the event that there exists $x\in B_{3K_n}$ connected to $y\notin B_{9K_n}$, and $B(K_n)$ be the event that all the edges of length strictly larger than $K_n$ with one endpoint in $B_{9K_n}$ are closed. Notice that if $B(K_n)$ occurs and neither $B^{-2}_{3K_n}$ nor $B^{2}_{3K_n}$ is  $K_n$-crossed, then $A(K_n)$ does not occur. Hence, by independence, we have that 
\begin{align}\label{eq:4}
\mathbb P_{\beta,\lambda}[B(K_n)]\overline p_{\beta,\lambda}(K_n)^2\le 1-\mathbb P[A(K_n)].
\end{align}
Since $\mathbb P_{\beta,\lambda}[B(K_n)]\ge c_1(\beta)>0$ (by a computation very similar to \eqref{eq:3}), we deduce that for every $n\ge0$,
\[
\mathbb P_{\beta,\lambda}[A(K_n)]\le 1-c_1(\beta)/C^2.
\]
{We obtained the above estimate by assuming $\theta(\beta,\lambda) < \theta$ with $\beta \theta^2 <1$. We see from this estimate that it is not possible to also have $\theta(\beta,\lambda)>0$. Indeed, otherwise, this would contradict} measurability since the probability that there exists $x\in B_{K_n}$ connected to infinity, which is itself included in $A(K_n)$, would have a probability tending to 1 in this case.\end{proof}

\section{Long-range Fortuin-Kasteleyn percolation and its applications to the Ising and Potts models}

\subsection{Statement of the results.} Here, we define the Fortuin-Kasteleyn percolation \cite{For70,ForKas72} (we also refer to \cite{Gri06,Dum17a} for general background on FK percolation). Let  $S\subset T$ be two finite subsets of $\mathbb Z$, let $\xi$ be a partition of the vertices $T\setminus S$. The FK percolation measure  on edges included in $T$ with at least one endpoint in $S$, with boundary conditions (b.c.) $\xi$, is  defined by the formula
\[
\mathbb P_{S,T,\beta,\lambda,q}^\xi[\omega]=\frac{q^{k(\omega^\xi)}}Z \prod_{\{i,j\}\subset T:\{i,j\}\cap S\ne \emptyset} p_{i,j}(\beta,\lambda)^{\omega_{i,j}}(1-p_{i,j}(\beta,\lambda))^{1-\omega_{i,j}},
\] 
where $\omega_{i,j}=1$ if $\{i,j\}$ is open and $0$  if it is closed, $\omega^\xi$ is the graph obtained from $\omega$ by wiring all the vertices outside $S$ belonging to the same element of the partition $\xi$. Let $\xi=1$ (resp.~$\xi=0$) be  the wired (resp.~free) boundary conditions corresponding to the partitions equal to $\{T\setminus S\}$ (resp.~only singletons). 

Given a partition $\xi$ of $\mathbb Z\setminus S$, define $\mathbb P_{S,\beta,\lambda,q}^\xi$ as the limit of the measure $\mathbb P_{S,B_K,\beta,\lambda,q}^{\xi_K}$ as $K$ tends to infinity, where $\xi_K$ denotes the partition induced by $\xi$ on $B_K\setminus S$. Let $\mathbb P_{\beta,\lambda,q}^1$ be the measure on $\mathbb Z$ defined as the limit as $K$ tends to infinity of the measures $\mathbb P_{B_K,\beta,\lambda,q}^1$ and $\theta(q,\beta,\lambda)$ be the $\mathbb P_{\beta,\lambda,q}^1$-probability that 0 is connected to infinity by a path of open edges\footnote{The proof that these limits exist for any partition $\xi$ of $\Z$ proceeds as usual by monotonicity.}.


\begin{theorem}\label{thm:main FK}
For $q\ge1$,
\begin{itemize}
\item[(i)] For $\beta>1$, there exists $\lambda<\infty$ large enough {so} that $\theta(q,\beta,\lambda)>0$.
\item[(ii)] For $\beta,\lambda>0$, $\theta(q,\beta,\lambda)>0$ implies $\beta\theta(q,\beta,\lambda)^2\ge 1$.
\end{itemize}
\end{theorem}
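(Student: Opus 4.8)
\textbf{Proof plan for Theorem~\ref{thm:main FK}.}

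The plan is to mimic the two halves of the proof of Theorem~\ref{thm:main perco}, using the same block-renormalization scheme, but carrying along the Fortuin--Kasteleyn weights and the boundary conditions. The two inputs that make this adaptation work are monotonicity in boundary conditions and the finite-energy/DLR properties of FK percolation with $q\ge 1$, which together replace the plain independence used in the Bernoulli case. The key observation is that both renormalization lemmas (Lemma~\ref{lem:1} and Lemma~\ref{lem:2}) only ever use independence through three mechanisms: (a) disjoint blocks decouple, (b) conditioning on a cluster gives \emph{negative} information on edges with both endpoints outside the explored region, and (c) the product formula $\prod e^{-\beta J_{x,y}}$ for the probability that two clusters fail to be joined. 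For $q\ge1$, (b) becomes an application of the FKG inequality together with the comparison between FK measures with different boundary conditions, while (c) must be replaced by a lower bound of the same form: conditionally on the state of all edges except those joining $\mathbf C^-$ to $\mathbf C^+$, the FK-conditional probability that no such edge is open is, by finite energy and $q\ge1$, still at least $\prod_{x\in\mathbf C^-,y\in\mathbf C^+} e^{-\beta J_{x,y}}$ up to a harmless $q$-dependent constant — indeed each closed edge contributes a factor $(1-p_{x,y})/(q\text{-factor})$, and the $q$-factors telescope favourably. Granting this, the computation leading to $(12/(C-|i|))^{\beta\theta^2}$ in~\eqref{eq:2} goes through verbatim, and so does the induction in the proof of Theorem~\ref{thm:main perco}(i), now run under the wired measure $\mathbb P^1_{\beta,\lambda,q}$; this yields part~(i).

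For part (i) concretely, I would first set up the $\theta$-good/$\theta$-bad vocabulary exactly as in Section~\ref{sec:notation} but for the wired FK measure, define $p^1_{\beta,\lambda,q}(K,\theta)=\mathbb P^1_{\beta,\lambda,q}[B_K\text{ is }\theta\text{-bad}]$, and prove the analogue of Lemma~\ref{lem:1}. The only subtlety is that, to make disjoint $\theta$-bad blocks decouple as in~\eqref{eq:1a} and to justify that conditioning on $E_i$ and on $\mathbf C^-,\mathbf C^+$ only hurts, I would work inside a large finite box $B_N$ with wired boundary conditions, use the monotonicity $\mathbb P^{1}_{B_N}\ge \mathbb P^{\xi}_{B_N}$ for any $\xi$, push all explored-cluster conditionings to a worst-case (free) boundary condition on the unexplored edges, and then let $N\to\infty$. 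Because wired b.c.\ dominate, the "$\theta$-good" events only become more likely in the limit, so the renormalization inequality survives the limit. Then the same choice of scales $K_n,\theta_n,C_n$ and the same induction give $p^1_{\beta,\lambda,q}(K_n,\theta_n)\to 0$ fast, hence $\theta(q,\beta,\lambda)\ge 3/8>0$ for $\lambda$ large, exactly as before.

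For part (ii) I would copy the proof of Lemma~\ref{lem:2} and of Theorem~\ref{thm:main perco}(ii), working again in a finite box but now this time it is cleanest to keep \emph{free} boundary conditions on the box $B_{3CK}$ when bounding $\overline p_{\beta,\lambda,q}(K)$ from below: the event that a block is $K$-crossed is increasing, so free b.c.\ give the smallest crossing probability and the inequality obtained will be valid for the infinite-volume wired measure as well by monotonicity. The three ingredients of that proof — the FKG inequality (valid for $q\ge1$), the fact that conditioning on $X=(\eta,\mathcal E)$ brings only negative information on the increasing event "$B$ is $K$-crossed" (again FKG plus b.c.\ comparison), and the bridge estimate~\eqref{eq:3a} which only needs the one-arm bound $\mathbb P[0\leftrightarrow\mathbb Z\setminus B_R]$ under the wired measure together with finite energy — all have direct FK counterparts. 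The conclusion $\theta(q,\beta,\lambda)>0\Rightarrow\beta\theta(q,\beta,\lambda)^2\ge1$ then follows by the same measurability argument. The main obstacle, and the step that deserves a careful write-up, is precisely the replacement of the product formula~(c): one must check that the finite-energy lower bound for "no edge between $\mathbf C^-$ and $\mathbf C^+$ is open" loses only a multiplicative constant (not something growing with $|\mathbf C^\pm|$) when $q\ge1$, so that the exponent $\beta\theta^2$ in~\eqref{eq:2} is genuinely preserved; everything else is bookkeeping with monotonicity in boundary conditions and passing to the infinite-volume limit.
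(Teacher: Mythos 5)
Your overall blueprint (block renormalization, the $\max$/$\min$-over-boundary-conditions trick, FKG, spatial Markov property, bridge estimate) is the same as the paper's, and your treatment of part (ii) matches theirs. The gap is in the one step you yourself flag as ``the main obstacle'': the FK analogue of the product estimate~\eqref{eq:2}.

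First, the inequality you state is in the wrong direction for part~(i). In~\eqref{eq:2} one needs an \emph{upper} bound on the conditional probability that no edge between $\mathbf C^-$ and $\mathbf C^+$ is open (this is what shows that the two clusters merge with high probability); you write ``at least $\prod e^{-\beta J_{x,y}}$''. Second, the per-edge reasoning is false: for $q\ge 1$ the single-edge conditional probability of being closed, given its endpoints are not otherwise connected, is $\frac{q(1-p)}{p+q(1-p)}\ge 1-p$, not ``$(1-p)$ divided by a $q$-factor''. Naively multiplying these over all edges between $\mathbf C^-$ and $\mathbf C^+$ would replace $\beta$ by roughly $\beta/q$ in the exponent, which would ruin the threshold $\beta>1$. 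There is no ``telescoping'' at the level of per-edge conditional weights; if anything, stacking the worst-case per-edge factor loses a power of $q$ per edge. So as written the argument does not establish the exponent $\beta\theta^2$.

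The paper's fix is genuinely different from what you propose: it writes the ``all edges closed'' event as $A\cap B$ where $A$ concerns edges of $G$ inside $B_{RK}$ and $B$ the rest, uses the elementary inequality $\mathbb P_G^\xi[A\cap B]\le \mathbb P_G^\xi[B\mid A^c]/\mathbb P_G^\xi[A^c\mid B]$, and observes that \emph{given $A^c$} the two clusters are wired so the remaining edges revert to the Bernoulli weight $e^{-\beta J}$ (numerator), while the denominator is bounded below by a constant for $R$ large because even the inflated weight $e^{-\beta J/(2q)}$ per edge is summable enough. Taking $R=C^\delta$ gives $O(1)\,C^{-\beta\theta^2(1-\delta)}$, which suffices.

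There \emph{is} a version of the idea you are gesturing at that does work and is arguably simpler: conditionally on everything except the $G$-edges (and on the event that $\mathbf C^\pm$ are not already merged through the conditioned edges), the cluster count drops by exactly one the moment any $G$-edge opens, so the conditional probability that all $G$-edges are closed equals $\dfrac{q\prod(1-p)}{q\prod(1-p)+1-\prod(1-p)}\le q\prod(1-p)$. In other words, only a \emph{single} global factor of $q$ enters, not one per edge, because the $q$-weight counts clusters and not edges. If you want to avoid the paper's intermediate scale $R$, this is the argument you would need to spell out; as it stands, your proposal neither gives this calculation nor the paper's, and the phrase ``the $q$-factors telescope favourably'' does not substitute for either.
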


The result above covers a certain number of results, including \cite{NewSch} for (i) and  Aizenman, Chayes, Chayes and Newman \cite{aizenman1988} for (ii). In its current form, the result (i) corresponds to a paper of  Imbrie and Newman  \cite{ImbrieNewman}.

Using the coupling between FK percolation and Potts models (see e.g.~\cite{Gri06}), the previous theorem has the following corollary for the long-range 1D Ising and Potts models (see the beginning of the paper for the corresponding history and references). We do not define the models  here and simply introduce the parameter $m(q,\beta,\lambda)$ corresponding to the magnetization of the Potts model.
\begin{corollary}\label{cor:main FK}
Fix an integer $q\ge2$,
\begin{itemize}
\item[(i)] For $\beta>1$, there exists $\lambda<\infty$ large enough so that $m(q,\beta,\lambda)>0$.
\item[(ii)] For $\beta,\lambda>0$,  $m(q,\beta,\lambda)>0$ implies $\beta m(q,\beta,\lambda)^2\ge 1$. \end{itemize}
\end{corollary}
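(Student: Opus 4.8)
The plan is to deduce Corollary~\ref{cor:main FK} from Theorem~\ref{thm:main FK} through the Edwards--Sokal coupling between the $q$-state Potts model and FK percolation with the same coupling constants $(\beta,\lambda)$; see \cite{Gri06}. Recall that in a finite box one samples an FK configuration and then colours each cluster independently and uniformly among the $q$ colours, with the exception that the wired cluster receives the distinguished colour $1$; the resulting spin configuration has law the Potts measure with monochromatic ``$1$'' boundary conditions, because the two partition functions agree term by term. Consequently, with the usual normalisation $m(q,\beta,\lambda):=\tfrac{q\,\mathbb P^{+}[\sigma_0=1]-1}{q-1}$ (which for $q=2$ is the familiar Ising magnetization $\langle\sigma_0\rangle^{+}$), the coupling yields the exact identity $m(q,\beta,\lambda)=\theta(q,\beta,\lambda)$.

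First I would carry this out in finite volume $B_K$ with wired/$1$-boundary conditions, where the coupling is elementary, and then pass to the infinite-volume limit $K\to\infty$ along the monotone sequence of boxes: the limit defining $\theta(q,\beta,\lambda)$ exists by the monotonicity argument already recalled in the excerpt, the corresponding Potts limit exists for the same reason, and the finite-volume identity $m=\theta$ survives in the limit. Here one needs $q\ge 2$ to be an integer for the Potts model to make sense, whereas Theorem~\ref{thm:main FK} holds for all real $q\ge1$, so the corollary is a strict specialisation.

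With $m(q,\beta,\lambda)=\theta(q,\beta,\lambda)$ established, both assertions are immediate. For (i): given $\beta>1$, Theorem~\ref{thm:main FK}(i) produces some finite $\lambda$ with $\theta(q,\beta,\lambda)>0$, hence $m(q,\beta,\lambda)>0$. For (ii): if $m(q,\beta,\lambda)>0$ then $\theta(q,\beta,\lambda)>0$, so Theorem~\ref{thm:main FK}(ii) gives $\beta\theta(q,\beta,\lambda)^2\ge1$, that is $\beta m(q,\beta,\lambda)^2\ge1$.

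The only genuine point of care — and hence the ``main obstacle'', although it is entirely routine — is fixing the normalisation of $m$ so that the coupling produces an exact equality rather than an identity up to a multiplicative constant: since part (ii) carries the sharp constant $1$ in $\beta m^2\ge1$, any spurious factor (a power of $q$, a $1-1/q$, $\dots$) would alter the statement, so one must verify that the convention for $m$ used in \cite{aizenman1988,ImbrieNewman} coincides with $\theta$ on the nose. Everything else is a verbatim transcription of the FK statements.
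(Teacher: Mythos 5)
Your proof is correct and takes exactly the approach the paper intends: the paper proves Corollary~\ref{cor:main FK} simply by invoking the Edwards--Sokal coupling between FK percolation and Potts models (citing \cite{Gri06}), and you have filled in the (routine) details, including the verification that with the standard normalisation $m=\tfrac{q\,\mathbb P^{+}[\sigma_0=1]-1}{q-1}$ one gets the exact identity $m(q,\beta,\lambda)=\theta(q,\beta,\lambda)$ rather than an equality up to constants. Your observation that the sharp constant in part (ii) forces one to track the normalisation carefully is a sensible caveat that the paper leaves implicit.
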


\subsection{Proof of Theorem~\ref{thm:main FK}(i).}

The proof is very similar to the proof of Theorem~\ref{thm:main perco}(ii) and we simply explain how the proof is modified. 
 Define
\[
p_{q,\beta,\lambda}(K,\theta):=\max_{\xi\text{ b.c. on }\mathbb Z\setminus B_K}\,\mathbb P_{K,\beta,\lambda,q}^\xi[B_K\text{ is $\theta$-bad}]
\]
(note that by monotonicity it is achieved for free boundary conditions $\xi=0$ but we will not use this fact). 
In the proof of Lemma~\ref{lem:1}, all the deterministic observations are the same. Also, \eqref{eq:1a} can be obtained in the same way as before using the spatial Markov property (since $p_{q,\beta,\lambda}(K,\theta)$ is expressed in terms of the maximum over boundary conditions). 

The only step that requires care is the proof of \eqref{eq:25}. Indeed, the first difference is that the states of the edges $\{x,y\}$ with $x\in\mathbf C^-$ and $y\in\mathbf C^+$ are not independent of the conditioning on $\mathbf C^-$ and $\mathbf C^+$, and second that the state of edges are not independent of each other. Still, we now show that the estimate \eqref{eq:25} holds for every $q>0$. 

Condition on $\mathbf C^-$, $\mathbf C^+$, and every edge that is not linking $x\in \mathbf C^-$ and $y\in\mathbf C^+$. Consider the graph $G$ composed of vertices in $\mathbf C^-\cup\mathbf C^+$ and edges between $x\in\mathbf C^-$ and $y\in\mathbf C^+$ and observe that the previous conditioning does not reveal the state of these edges. Let $\xi$ be the boundary condition induced by this conditioning and let $\mathbb P^\xi_G$ be the associated FK percolation on $G$. Note that all the vertices in $\mathbf C^-$ (resp.~$\mathbf C^+$) are wired together. 

At this stage it is unclear whether the vertices in $\mathbf C^-$ are wired to those in $\mathbf C^+$ or not by $\xi$. If they are, then the probability that each edge $\{x,y\}$ is closed is $e^{-\beta J_{x,y}}$ independently of the other edges and the same computation as in the Bernoulli case holds true. Otherwise, consider an intermediate constant $R\in[1,C)$ and define $A$ (resp.~$B$) as the event that all the edges in $G$ with $\{x,y\}\subset B_{RK}$ are closed (resp.~all the remaining edges of $G$). 

{
Notice that 
\begin{align}\label{e.AB}
\mathbb P_G^\xi[A\cap B]&\le  \frac{\mathbb P_G^\xi[B|A^c]}{\mathbb P_G^\xi[A^c|B]}\,.
\end{align}
Let us analyze first the numerator $P_G^\xi[B|A^c]$. Thanks to the conditioning on $A^c$, we may work with products over edges 
of 
 $e^{-\beta J_{x,y}}$ instead of the less convenient quantity 
\begin{align}\label{e.vee}
e^{-\beta J_{x,y}} \vee 
\frac{q\, e^{-\beta J_{x,y}}}
{1-e^{-\beta J_{x,y}} + q\, e^{-\beta J_{x,y}} }\,
\end{align} which would arise from the probability $\frac{1-p}{p+(1-p)q}$ that an edge $e=\{x,y\}$ is closed knowing that its endpoints are not connected in $\omega_{|G\setminus e}$. This is the reason why we introduced the event $A$ on the mesoscopic scale $RK \ll CK$. Recall that the argument assumes that the $K$-blocks $B_k^j, j\notin \{i-1,i,i+1\}$ are {\em $\theta$-good} which implies that  the density of $\mathbf C^-$ and $\mathbf C^+$ are at least $\theta$ on both sides. The above observation that the weight on each edge is $e^{-\beta J_{x,y}}$ knowing $A^c$ now implies (provided $R$ is chosen large enough, and then $C$ even larger), by following the same sum/integral analysis as for the estimate \eqref{eq:25}, that
\begin{align}\label{e.num}
P_G^\xi[B|A^c] & \leq O(1) \exp\Big[-\beta \theta^2  \sum_{r,s=RK}^{CK}\frac1{(r+s-1)^2}\Big]   \leq O(1) \Big( \frac {R} {C} \Big)^{\beta \theta^2}\,.
\end{align}
Now, for the denominator $\mathbb P_G^\xi[A^c|B]$, start by noticing that if $R$ is large enough and if $q\geq 1$, then the FK weight in~\eqref{e.vee}  for any $e=\{x,y\}$ in $G$ is at most
\begin{align*}\label{}
\frac{q\, e^{-\beta J_{x,y}}}
{1-e^{-\beta J_{x,y}} + q\, e^{-\beta J_{x,y}} }& = 
e^{-\beta J_{x,y}} \frac 1 {1-\tfrac{q-1}q (1-e^{-\beta J_{x,y}})} \\
& \leq 
 e^{-\beta J_{x,y}} e^{\frac{q-1/2}q \beta  J_{x,y}} \\
 & =  e^{-\frac \beta {2q} J_{x,y}}\,.
\end{align*}
If on the other hand $0<q\leq 1$, this is simpler as the weights in~\eqref{e.vee} are then smaller than $e^{-\beta J_{x,y}}$. 

As such, modulo the same analysis as for estimate~\eqref{eq:25}, this gives us an upper bound for $\mathbb P_G^\xi[A|B]$ of order
\begin{align*}
\mathbb P_G^\xi[A|B] & \leq O(1) \exp\Big[-\beta \theta^2 (1\wedge \frac 1{2q}) \sum_{r,s=K}^{RK}\frac1{(r+s-1)^2}\Big] \\
& \leq  O(1)  \exp(- \Omega(1) \log R) \leq \frac 1 2\,,
\end{align*}
if $R$ is chosen large enough. 
}
{
By plugging this estimate together with \eqref{e.num} into \eqref{e.AB} and choosing $R=C^{\delta}$ with $C$ large enough, this gives us 
\begin{align*}\label{}
\mathbb P_G^\xi[A\cap B]& \leq O(1) C^{-\beta \theta^2(1-\delta)}\,,
\end{align*}
for any small exponent $\delta$. This ends the proof of the analog of estimate~\eqref{eq:25} for FK percolation. (Note that we have written the proof for the central block $B_K^0$, but the same analysis would give an upper bound of $O(1) \left( C-|i| \right)^{-\beta \theta^2(1-\delta)}$ for the block $B_k^i$). 
}
\begin{remark}
Notice that {remarkably}, the proof of {Theorem~\ref{thm:main FK}(i)}  works for every $q>0$.
\end{remark}

\subsection{Proof of Theorem~\ref{thm:main FK}(ii).} 

Define
\[
\overline p_{q,\beta,\lambda}(K,\theta):=1-\max_{\substack{S\subset B_{3K}\\\xi\text{ b.c. on }\mathbb Z\setminus B_{3K}}} \,\mathbb P_{S,\beta,\lambda,q}^\xi[B_{3K}\text{ is $K$-crossed}].
\] 
(the maximum is achieved for $S=B_{3K}$ and for wired boundary conditions $\xi=1$ but we will not use this fact here). 
Then, the proof of Lemma~\ref{lem:2} is the same except in two places. First, the states of edges in different unbridged boxes are not independent anymore so to derive \eqref{eq:5}, we replace independence by the spatial Markov property and the fact that $\overline p_{q,\beta,\lambda}(K,\theta)$ is defined as a minimum over boundary conditions. 

The rest of the proof is the same (we can use the FKG inequality since $q\ge1$), except in the proof of \eqref{eq:3a}.  There, we use that conditioned on the configuration outside of $\{x,y\}$, the probability that $\{x,y\}$ is open is smaller than $1-e^{-\beta J_{x,y}}$ (since $q\ge1$), as well as the observation that  we can pick $R$ such that 
\[
\max_\xi \mathbb P_{B_R,\beta,\lambda,q}^\xi[0\leftrightarrow \mathbb Z\setminus B_R]^2+|B_R|^2e^{-\beta (K-2R)/q}\le \theta^2
\] 
since the maximum is reacher for $\xi=1$ and that the quantity converges to $\theta(q,\beta,\lambda)$ as $R$ tends to infinity.

\subsection*{Acknowledgements} The first author is funded by the ERC CriBLaM, the Swiss FNS and the NCCR SwissMap. The research of the second author is supported by the ERC grant LiKo 676999. 
The third author is supported by the European Research Council (ERC) under the European Union’s Horizon 2020 research and innovation program (grant agreement No 851565) and by the NCCR SwissMap.

\bibliographystyle{alpha}

\end{document}